\newtheorem{nummer}{ }
\newtheorem{thm}[nummer]{\bf Theorem}
\newtheorem{prop}[nummer]{\bf Proposition}
\newtheorem{lem}[nummer]{\bf Lemma}
\newtheorem{cor}[nummer]{\bf Corollary}
\newtheorem{rmk}{\bf Remark}
\newcommand{\rmksty}{\rm}
\newtheorem{alg}{\bf Algorithm}
\newcommand{\algsty}{\rm}
\newcommand{\ie} {\sl i.e.}
\newcommand{\dppp}{double-pytha\-potent pair}
\newcommand{\dppps}{double-pytha\-potent pairs}
\newcommand{\qppp}{quadratic pytha\-potent pair}
\newcommand{\qppps}{quadratic pytha\-potent pairs}
\newcommand{\pp}{pytha\-gorean pair}
\newcommand{\pps}{pytha\-gorean pairs}
\newcommand{\tor}[2]{\Z/#1\Z\times\Z/#2\Z}
\newcommand{\Q}{\mathds{Q}}
\newcommand{\Z}{\mathds{Z}}
\newcommand{\THM}{Theorem}
\newcommand{\LEM}{Lemma}
\newcommand{\tq}{\tilde q}
\newcommand{\tm}{\tilde m}
\newcommand{\tn}{\tilde n}
\newcommand{\tp}{\tilde p}
\newcommand{\tb}{\tilde b}
\newcommand{\ta}{\tilde a}
\newcommand{\C}{\Gamma}
\newcommand{\Cab}{\C_{a,b}}
\newcommand{\Caabb}{\C_{a^2,b^2}}
\def\opargproof[#1]{\par\noindent {\bf #1 }}
\definecolor{darkgreen}{rgb}{0,.6,0}
\begin{document}
\begin{center}
{\LARGE\bf Pairing Pythagorean Pairs}

\medskip

{\small Lorenz Halbeisen}\\[1.2ex] 
{\scriptsize Department of Mathematics, ETH Zentrum,
R\"amistrasse\;101, 8092 Z\"urich, Switzerland\\ lorenz.halbeisen@math.ethz.ch}\\[1.8ex]
{\small Norbert Hungerb\"uhler}\\[1.2ex] 
{\scriptsize Department of Mathematics, ETH Zentrum,
R\"amistrasse\;101, 8092 Z\"urich, Switzerland\\ norbert.hungerbuehler@math.ethz.ch}
\end{center}

\hspace{5ex}{\small{\it key-words\/}: pythagorean pair, elliptic curve}

\hspace{5ex}{\small{\it 2020 Mathematics Subject 
Classification\/}: {\bf 11D72} 11G05

\begin{abstract}\noindent
{\small A pair $(a,b)$ of positive integers is a {\it \pp\/} if 
$a^2+b^2=\Box$ ({\ie}, \hbox{$a^2+b^2$} is a square). A pythagorean pair $(a,b)$ is called
a {\it\dppp\/} if there is another {\pp} $(k,l)$ such that $(ak,bl)$ is a {\pp},
and it is called a {\it\qppp\/} if there is another {\pp} $(k,l)$ which is not a multiple
of $(a,b)$, such that $(a^2k,b^2l)$ is a {\pp}. 
To each {\pp} $(a,b)$ we assign an elliptic curve $\Gamma_{a,b}$
with torsion group $\tor 2 4$, such that $\Gamma_{a,b}$ has positive rank if and only if 
$(a,b)$ is a {\dppp}. Similarly, to each {\pp} $(a,b)$ we assign an elliptic curve $\Gamma_{a^2,b^2}$
with torsion group $\tor 2 8$, such that $\Gamma_{a^2,b^2}$ has positive rank if and only if 
$(a,b)$ is a {\qppp}. Moreover, in the later case we obtain that every elliptic curve $\Gamma$
with torsion group $\tor 2 8$ is isomorphic to a curve of the form $\Gamma_{a^2,b^2}$, where
$(a,b)$ is a {\pp}. As a side-result we get that if $(a,b)$ is a {\dppp}, then there are 
infinitely many {\pps} $(k,l)$, not multiples of each other, such that
$(ak,bl)$ is a {\pp}; the analogous result holds for {\qppps}.
}
\end{abstract}

\section{Introduction}

A pair $(a,b)$ of positive integers is a {\it \pp\/} if 
\hbox{$a^2+b^2$} is a square, denoted 
$a^2+b^2=\Box\,.$ A pythagorean pair $(a,b)$ is called
a {\it\dppp\/} if there is another {\pp} $(k,l)$ such that $(ak,bl)$ is a {\pp}, {\ie},
$$a^2+b^2=\Box\,,\qquad k^2+l^2=\Box\,,\qquad\text{and}\qquad (ak)^2+(bl)^2=\Box\,.$$
Notice that since for positive integers $a$ and $b$, $a^4+b^4$ is never a square
(see~\cite[Oeuvres, I, p.\,327; III, p.\;264]{fermat-oevres}, 
and $(a^2,b^2)$ is never a {\pp}.
Furthermore, a pythagorean pair $(a,b)$ is called a {\it\qppp\/} 
if there is another {\pp} $(k,l)$ which is not a multiple
of $(a,b)$, such that $(a^2k,b^2l)$ is a {\pp}, {\ie},
$$a^2+b^2=\Box\,,\qquad k^2+l^2=\Box\,,\qquad\text{and}\qquad (a^2k)^2+(b^2l)^2=\Box\,.$$ 

To each {\pp} $(a,b)$ we assign the elliptic curve 
$$
\Cab:\hspace*{3ex} y^2\ =\ x^3+(a^2+b^2)x^2+a^2b^2x\,,
$$
and show that the curve $\Cab$ has torsion group isomorphic to $\tor 2 4$ and that
$(a,b)$ is a {\dppp} if and only if $\Cab$ has positive rank.
With the points of infinite order on the curve $\Cab$, we can generate
infinitely many {\pps} $(k,l)$, not multiples of each other, such that
$(ak,bl)$ are {\pps}.

Similarly, for each {\pp} $(a,b)$, the elliptic curve 
$$
\Caabb:\hspace*{3ex} y^2\ =\ x^3+(a^4+b^4)x^2+a^4b^4x\,,
$$
has torsion group isomorphic to $\tor 2 8$ and 
$(a,b)$ is a {\qppp} if and only if $\Caabb$ has positive rank.
Moreover, we can show that every elliptic curve $\Gamma$
with torsion group $\tor 2 8$ is isomorphic to a curve of the form $\Caabb$ for some
{\pp} $(a,b)$. 
Similar as above, with the points of infinite order on the curve $\Caabb$, we can generate
infinitely many {\pps} $(k,l)$, not multiples of each other, such that
$(a^2k,b^2l)$ are {\pps}.

\begin{rmk} {\rmksty The results are similar for congruent numbers: To each positive 
integer $A$ we can assign an elliptic curve 
$$\C_{A}:\hspace*{3ex} y^2\ =\ x^3-A^2x 
$$
with torsion group isomorphic to $\tor 2 2$ such that $A$ is a {\it congruent
number\/} if and only if $\C_{A}$ has positive rank. Moreover, with the points 
of infinite order on the curve $\C_{A}$, we can generate
infinitely rational triples $(r,s,t)$ such that $r^2+s^2=t^2$ and $\frac{rs}2=A$
(an elementary proof of this result is given in~\cite{Fermat}).}
\end{rmk}


\section{Quadratic Pythapotent Pairs}

In this section we consider {\qppps}\,---\,this case is slightly easier 
than the case with {\dppps}.
First we show that the curve $\Caabb$ has torsion group isomorphic to $\tor 2 8$, and 
then we show how we obtain {\pps} $(k,l)$ from a point on $\Caabb$ with quadratic
$x$-coordinate such that  $(a^2k,b^2l)$ is a {\pp}.

\begin{prop} If $(a,b)$ is a {\pp}, then the elliptic curve $\Caabb$ has torsion group $\tor 2 8$.
Vice versa, if an elliptic curve $\C$ has torsion group $\tor 2 8$, then there exists a 
{\pp} $(a,b)$ such that $\C$ is isomorphic to $\Caabb$.
\end{prop}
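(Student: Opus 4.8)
\emph{Overall plan.} I would treat both directions through the complete $2$-descent picture for curves with full rational $2$-torsion, and invoke Mazur's classification of torsion subgroups over $\Q$ to turn ``contains $\tor 2 8$'' into ``equals $\tor 2 8$''. The guiding idea is that being a {\pp} is precisely what makes a certain rational point of order $4$ on $\Caabb$ divisible by $2$.

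\emph{First implication.} Over $\Q$ the quadratic $x^2+(a^4+b^4)x+a^4b^4$ has discriminant $(a^4-b^4)^2$, so $\Caabb$ is the curve $y^2=x(x+a^4)(x+b^4)$; since a {\pp} forces $a\neq b$, the roots $0,-a^4,-b^4$ are distinct, the curve is nonsingular, and $\Caabb[2](\Q)\cong\tor 2 2$. I would first note that the $2$-torsion point $(0,0)$ lies in $2\,\Caabb(\Q)$: by the usual description of $\Caabb(\Q)/2\,\Caabb(\Q)$ inside $(\Q^\ast/\Q^{\ast2})^2$, this holds because $0-(-a^4)=a^4$ and $0-(-b^4)=b^4$ are squares. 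Halving $(0,0)$ yields the order-$4$ point $P=(a^2b^2,\,a^2b^2(a^2+b^2))$, which is rational for \emph{every} $a,b$. Now the {\pp} hypothesis enters: writing $a^2+b^2=c^2$ with $c\in\Z$, one gets $x(P)-0=(ab)^2$, $x(P)+a^4=(ac)^2$, $x(P)+b^4=(bc)^2$, so the same criterion gives $P\in 2\,\Caabb(\Q)$, hence a rational point $Q$ of order $8$ with $2Q=P$ (explicitly $x(Q)=ab(a+c)(b+c)$). Then $\langle Q\rangle+\Caabb[2]$ is a subgroup of $\Caabb(\Q)_{\mathrm{tors}}$ isomorphic to $\tor 2 8$ (the cyclic group $\langle Q\rangle$ meets $\Caabb[2]$ only in $\langle 4Q\rangle$, so adjoining either of the remaining two $2$-torsion points produces $\tor 2 8$), and since $\tor 2 8$ is the only group on Mazur's list admitting a subgroup isomorphic to $\tor 2 8$, we conclude $\Caabb(\Q)_{\mathrm{tors}}=\tor 2 8$.

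\emph{Converse.} Let $E/\Q$ have $E(\Q)_{\mathrm{tors}}\cong\tor 2 8$. In $\tor 2 8$ every element of order $8$ has the same image under multiplication by $4$, a fixed $2$-torsion point $T^\ast$; after translating $x$ I may assume $T^\ast=(0,0)$, so $E:\,y^2=x(x-\alpha)(x-\beta)$ with $(0,0)=4R$ for some point $R$ of order $8$. Since $(0,0)=2(2R)\in 2E(\Q)$, the divisibility criterion for a $2$-torsion point gives $-\alpha,-\beta\in\Q^{\ast2}$; write $\alpha=-s^2,\ \beta=-t^2$, so $E:\,y^2=x(x+s^2)(x+t^2)$. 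Applying the criterion to the order-$4$ point $2R$ — which is not $2$-torsion, so $x(2R)\neq0$, and lies in $2E(\Q)$ — shows $x(2R)$, $x(2R)+s^2$ and $x(2R)+t^2$ are all squares; moreover $2(2R)=(0,0)$ forces $x(2R)^2=\alpha\beta=s^2t^2$. Writing $x(2R)=p^2$ with $p\in\Q^\ast$, we get $s^2t^2=p^4$ and $p^2+s^2=x(2R)+s^2$ is a square, so $(s,p)$ is a {\pp} (of rationals, hence of positive integers after scaling). Finally $x\mapsto x/s^2,\ y\mapsto y/s^3$ carries $E$ to $y^2=x(x+s^4)(x+s^2t^2)=x(x+s^4)(x+p^4)$, which is $\Gamma_{s^2,p^2}$; thus $E\cong\Gamma_{s^2,p^2}$ with $(s,p)$ a {\pp}, using also that rescaling a {\pp} $(a,b)\mapsto(\lambda a,\lambda b)$ leaves the isomorphism class of $\Caabb$ unchanged (via $x\mapsto\lambda^4x,\ y\mapsto\lambda^6y$).

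\emph{Expected main obstacle.} The conceptual core — ``{\pp}'' $\Longleftrightarrow$ ``the order-$4$ point is halvable'' — is short once the $2$-descent set-up is in place; the real care lies in the square-class and sign bookkeeping: singling out which $2$-torsion point equals $4R$, matching up which square root of $\alpha\beta$ is $x(2R)$, and, in the converse, checking that the extracted pair $(s,p)$ really is a pair of positive integers forming a {\pp}. Mazur's theorem then supplies for free the upper bound that excludes any larger torsion.
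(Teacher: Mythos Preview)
Your proof is correct and proceeds by a genuinely different route from the paper. The paper argues through Kubert's explicit one-parameter family for curves with torsion $\tor 2 8$: it starts from the Tate normal form $y^2+(1-c)xy-ey=x^3-ex^2$ in Kubert's parameters, performs a chain of rational substitutions that brings it into the shape $\Caabb$ with $a=m^2-n^2$, $b=2mn$, and handles the forward direction by inverting those substitutions. You instead work directly with the halving criterion from $2$-descent on $y^2=x(x+a^4)(x+b^4)$, and your key observation --- that the pythagorean hypothesis $a^2+b^2=c^2$ is \emph{precisely} what makes the three differences $x(P)-e_i$ at the order-$4$ point $P=(a^2b^2,\,a^2b^2c^2)$ all squares, hence $P\in 2\,\Caabb(\Q)$ --- gives a structural explanation of why the condition enters, rather than recovering it by matching parameters. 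The paper's approach delivers the $(m,n)$-parametrization of the {\pp} for free and does not need Mazur's theorem explicitly (the upper bound on torsion is built into Kubert's classification); your approach is more conceptual, makes the equivalence ``{\pp} $\Leftrightarrow$ order-$4$ point is halvable'' transparent, and transfers cleanly to related divisibility questions on curves with full rational $2$-torsion.
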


\begin{proof}
Kubert~\cite[p.\,217]{Kubert} gives the following parametrization for elliptic curves
with torsion group $\tor 2 8$ (see also Rabarison~\cite[3.14]{Rabarison}): 
$$y^2+(1-c)xy-ey\ =\ x^3-ex^2$$
for $$\tau=\frac{\tm}{\tn}\,,\qquad d=\frac{\tau(8\tau+2)}{8\tau^2-1}\,,\qquad 
c=\frac{(2d-1)(d-1)}{d}\,,\qquad e=(2d-1)(d-1)\,.$$
After a rational transformation we obtain the curve 
$$y^2\ =\ x^3+\ta x^2+\tb x$$
with $$\ta=256 \tm^4 (2 \tm + \tn)^4 + (4 \tm^2 - (2 \tm + \tn)^2)^4\quad
\text{and}\quad\tb= 256 \tm^4 \tn^4 (2 \tm + \tn)^4 (4 \tm + \tn)^4\,.$$
Let $m:=\tm$ and $n:=\frac{2\tm+\tn}{2}$. Then we obtain the curve
$$y^2\ =\ x^3+2^8\bigl((2mn)^4 + (m^2 - n^2)^4)\bigr)x^2+2^{16}\bigl(
(2mn)^4\cdot(m^2 - n^2)^4\bigr)x\,,$$ which is, for $a:=m^2-n^2$ and $b:=2mn$, 
equivalent to the curve
$$\Caabb:\hspace{3ex}
y^2\ =\ x^3+(a^4 +  b^4)x^2+a^4b^4x\,.$$
Notice that by definition of $a$ and $b$, $(a,b)$ is a {\pp}.
   
For the other direction, recall that for every {\pp} $(a,b)$ we find 
positive integers $\lambda,m,n$ such that 
$m$ and $n$ are relatively prime and $\{a,b\}=\big{\{}\lambda(m^2-n^2),\lambda(2mn)\big{\}}$.
So, by the substitutions $\tm:=m$ and $\tn:=2(n-m)$, we see that every elliptic curve
$\Gamma$ with torsion group $\tor 2 8$ is isomorphic to a curve of the form 
$\Caabb$ for some {\pp} $(a,b)$. 
\end{proof}
   
\begin{rmk}
{\rmksty Let $a:=m^2-n^2$ and $b:=2mn$. 
If we replace $m$ and $n$ by $\bar{m}:=m+n$ and $\bar{n}:=m-n$, respectively, 
even though we obtain another {\pp} $(a',b')$, the corresponding elliptic curves
$\Caabb$ and $\C_{\bar{a}^2,\bar{b}^2}$ are equivalent.}
\end{rmk}

\begin{thm}\label{thm:main28} 
The {\pp} $(a,b)$ is a {\qppp} if and only if the elliptic curve $\Caabb$ 
has positive rank.
\end{thm}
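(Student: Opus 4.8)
The plan is to realise ``$(a,b)$ is a {\qppp}'' geometrically and then read it off the structure of $\Caabb$. Unwinding the definition, $(a,b)$ is a {\qppp} precisely when the curve
$$V:\qquad k^2+l^2=u^2,\qquad a^4k^2+b^4l^2=v^2$$
in $\mathbb{P}^3$ (coordinates $(k:l:u:v)$) has a rational point whose normalisation to coprime positive integers (after replacing coordinates by their absolute values if necessary) gives a {\pp} $(k,l)$, not a multiple of $(a,b)$, for which $(a^2k,b^2l)$ is again a {\pp}. Now $V$ is smooth of genus $1$, it carries the rational point $(1:0:1:a^2)$, and computing its Jacobian from the discriminant of the pencil spanned by the two defining quadrics exhibits the branch locus $\{0,\infty,-a^4,-b^4\}$ together with the monic model $y^2=x(x+a^4)(x+b^4)$; hence $V\cong\Caabb$ over $\Q$. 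The substantive step is to show that, under this isomorphism, the torsion group $\tor 2 8$ of $\Caabb$ (from the Proposition) is carried \emph{exactly} onto the ``degenerate'' points of $V$, namely those with $kl=0$ and those with $(k:l)=(b:\pm a)$ --- that is, onto the points that do not give a legitimate {\qppp}-witness; there are precisely $16$ such points, matching $\#(\tor 2 8)=16$, which is a first consistency check.

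Granting this, the equivalence follows at once: $\Caabb$ has positive rank $\iff$ $\Caabb(\Q)$ is infinite $\iff$ $\Caabb$ has a non-torsion rational point $\iff$ $V$ has a rational point off the $16$ degenerate ones $\iff$ $(a,b)$ is a {\qppp}. Concretely: a {\pp} $(k,l)$, not a multiple of $(a,b)$, with $(a^2k,b^2l)$ a {\pp} produces a rational point of $V$, necessarily non-degenerate, hence a non-torsion point of $\Caabb$ (the direction ``{\qppp} $\Rightarrow$ positive rank''); and a point of infinite order on $\Caabb$ maps to a non-degenerate point of $V$, which after clearing denominators and passing to absolute values (harmless, since only $k^2,l^2,u^2,v^2$ occur) is the desired {\pp} (the converse). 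Since a positive-rank curve has infinitely many points independent modulo torsion, and non-proportional pairs $(k,l)$ give distinct classes modulo torsion, the same argument yields the stated side-result: infinitely many such {\pps} $(k,l)$, no two proportional.

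It is convenient to record an explicit form of the backward map: for a point $P$ of infinite order on $y^2=x(x+a^4)(x+b^4)$, the duplication formula gives $x(2P)=\bigl(\tfrac{x(P)^2-a^4b^4}{2y(P)}\bigr)^{2}$, so $2P$ is a point of infinite order with square $x$-coordinate, and writing its coordinates out against the defining equations of $V$ produces the pair $(k,l)$ directly.

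The crux --- and what I expect to be the main effort --- is the torsion computation: list all $16$ torsion points of $\Caabb$ (most economically using the $\tor 2 8$ structure together with the parametrisation $a=m^2-n^2$, $b=2mn$ from the Proposition), transport them through $\Caabb\cong V$, and verify that they are exactly the $16$ degenerate points above, so that no torsion point yields a genuine witness and no genuine witness is torsion. The remaining ingredients --- smoothness of $V$, the Jacobian/discriminant computation, the duplication formula, and the integrality normalisations --- are routine.
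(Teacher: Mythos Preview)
Your approach is correct and genuinely different from the paper's. The paper never writes down the quadric intersection $V$; it works entirely on $\Caabb$, shows via explicit duplication and coordinate formulas (Lemmas~\ref{lem:trans28}--\ref{lem:final28}) that rational points with square $x$-coordinate correspond to {\pps} $(k,l)$ with $(a^2k,b^2l)$ a {\pp}, and then checks by hand which of the sixteen torsion points have square $x$-coordinate---only $x=1$, giving the trivial witness $(2b,2a)$. That last check is where the arithmetic depth sits: ruling out $x$-coordinates such as $\tfrac{m(m+n)}{n(m-n)}$ from being squares amounts to the statement that $mn(m^2-n^2)$ is never a perfect square, i.e.\ that $1$ is not a congruent number. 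Your geometric packaging via $V\cong\Caabb$ is cleaner and the counting heuristic (sixteen degenerate points, sixteen torsion points) is the right guide.

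That said, you explicitly defer the crux---showing the sixteen degenerate points of $V$ coincide with the torsion subgroup---and this is exactly where the work lies. It \emph{can} be done without the congruent-number input, which is a genuine advantage of your route, but you should indicate how. The sign-change involutions $\sigma_0:l\mapsto -l$, $\sigma_1:u\mapsto -u$, $\sigma_2:v\mapsto -v$ give (taking $O=(1{:}0{:}1{:}a^2)$ as origin) $\sigma_0=-\mathrm{id}$ and $\sigma_0\sigma_1,\sigma_0\sigma_2$ as translations by independent $2$-torsion; since $\sigma_0$ acts on the $k=0$ points as $\sigma_1\sigma_2$, those four points have order~$4$. The remaining eight need one more automorphism: $\tau:(k{:}l{:}u{:}v)\mapsto(b^2l{:}a^2k{:}v{:}a^2b^2u)$ is an involution of $V$ fixing $(b{:}a{:}c{:}abc)$ and sending $O$ to $(0{:}1{:}1{:}b^2)$, whence $2\cdot(b{:}a{:}c{:}abc)=(0{:}1{:}1{:}b^2)$ has order~$4$ and $(b{:}a{:}c{:}abc)$ has order~$8$. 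Now counting against $\tor 2 8$ finishes it. One small wrinkle: the paper uses ``multiple of $(a,b)$'' loosely to include $(b,a)$ (it calls $(2b,2a)$ a multiple of $(a,b)$), and your degenerate locus $(k{:}l)=(b{:}\pm a)$ matches that reading---make this explicit, since under a literal reading $(k,l)=(b,a)$ would already witness every {\pp} as a {\qppp}.
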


In order to prove {\THM}\;\ref{thm:main28}, we first transform the curve
$\Caabb$ to a another curve on which we carry out our calculations.

\begin{lem}\label{lem:trans28} 
If $x_2$ is the $x$-coordinate of a rational point on
$\Caabb$, then $$x_0:=\frac{a^2b^2}{x_2}$$ is the $x$-coordinate of a rational point on
the curve $$y^2x\ =\ a^2b^2+(a^4+b^4)x+a^2b^2 x^2\,.$$
\end{lem}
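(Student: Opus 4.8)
The plan is to perform an explicit birational change of variables and verify it sends points to points. The curve $\Caabb$ has equation $y^2 = x^3 + (a^4+b^4)x^2 + a^4b^4 x = x\bigl(x^2 + (a^4+b^4)x + a^4b^4\bigr)$, and one checks immediately that $x^2 + (a^4+b^4)x + a^4b^4 = (x+a^4)(x+b^4)$, so $0, -a^4, -b^4$ are the three $2$-torsion $x$-coordinates. First I would take a rational point $(x_2, y_2)$ on $\Caabb$ with $x_2 \neq 0$ (the case $x_2 = 0$ being a $2$-torsion point, which we may treat separately or simply exclude since the claim only concerns those $x_2$ for which $x_0 = a^2b^2/x_2$ makes sense) and set $x_0 := a^2b^2/x_2$. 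The natural companion substitution is $y_0 := a^2b^2 y_2 / x_2^2$, or some similar rescaling of $y_2$ by a power of $x_2$; the exact power is dictated by requiring the target equation to come out homogeneous in the right way.

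The key computational step is then to substitute $x_2 = a^2b^2/x_0$ (and the corresponding expression for $y_2$) into $y_2^2 = x_2^3 + (a^4+b^4)x_2^2 + a^4b^4 x_2$ and clear denominators. Multiplying the defining equation of $\Caabb$ through by $x_2^{-3}$ and regrouping, one gets $y_2^2/x_2^2 = x_2 + (a^4+b^4) + a^4b^4/x_2$; rewriting each term via $x_2 = a^2b^2/x_0$ turns the right-hand side into $a^2b^2/x_0 + (a^4+b^4) + x_0$, and the left-hand side, after the substitution $y_0 = a^2 b^2 y_2/x_2^2$ (so $y_0^2 = a^4b^4 y_2^2/x_2^4$ and hence $y_2^2/x_2^2 = y_0^2 x_2^2/(a^4b^4) = y_0^2 x_0^2 \cdot (a^2b^2/x_0)^2 \cdot$ wait—) — more carefully, since $y_2^2/x_2^2 = (y_2^2/x_2^4)\cdot x_2^2$ and $y_2^2/x_2^4 = y_0^2/(a^4b^4)$, we get $y_2^2/x_2^2 = y_0^2 x_2^2/(a^4b^4) = y_0^2 (a^2b^2/x_0)^2/(a^4b^4) = y_0^2/x_0^2$. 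Equating, $y_0^2/x_0^2 = x_0 + (a^4+b^4) + a^2b^2/x_0$; multiplying by $x_0^2$ gives $y_0^2 x_0 = x_0^3 + (a^4+b^4)x_0^2 + a^2b^2 x_0$. Hmm, that does not match the stated target either, so the scaling of $y$ must be chosen differently — I expect $y_0 := y_2/x_2$ together with dividing the whole relation by an extra factor, producing exactly $y^2 x = a^2b^2 + (a^4+b^4)x + a^2b^2 x^2$ after the substitution.

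Thus the real content, and the only place any care is needed, is bookkeeping: choosing the power of $x_2$ in the definition of $y_0$ so that, after substituting $x_2 = a^2b^2/x_0$ into the equation of $\Caabb$ and clearing the (now) negative powers of $x_0$, the identity collapses exactly to $y_0^2 x_0 = a^2b^2 + (a^4+b^4)x_0 + a^2b^2 x_0^2$. Since this is a single polynomial identity in the rational quantities $x_0, y_0, a, b$, once the correct scaling is pinned down the verification is a one-line algebraic manipulation with no case analysis. The main (very mild) obstacle is simply getting that exponent right and handling the excluded locus $x_2 = 0$; there is no genuine difficulty, which is presumably why the authors relegate this to a lemma used purely as a change of coordinates before the substance of Theorem~\ref{thm:main28}.
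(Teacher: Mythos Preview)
Your approach is essentially the paper's: both verify a birational change of coordinates sending $\Caabb$ to the target curve. The paper packages this more cleanly by working in homogeneous coordinates and applying the projective linear map $(x:y:z)\mapsto(Z:Y:X/(a^2b^2))$, then dehomogenising; in affine terms this is exactly $x_0=a^2b^2/x_2$ and $y_0=y_2/x_2$, the substitution you guessed at the end.

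Your first attempt only failed because of two bookkeeping slips. First, from $x_2=a^2b^2/x_0$ one has $a^4b^4/x_2=a^2b^2x_0$, not $x_0$; so the right-hand side should read $a^2b^2/x_0+(a^4+b^4)+a^2b^2x_0$. Second, multiplying $y_0^2/x_0^2$ by $x_0^2$ gives $y_0^2$, not $y_0^2x_0$. With these corrected, your choice $y_0=a^2b^2y_2/x_2^2$ actually yields $y_0^2=a^2b^2x_0+(a^4+b^4)x_0^2+a^2b^2x_0^3$, which is the target equation multiplied through by $x_0$; so that substitution works too, up to the harmless extra factor. The tidier choice $y_0=y_2/x_2$ gives
\[
y_0^2x_0=\frac{a^2b^2y_2^2}{x_2^3}=a^2b^2+\frac{(a^4+b^4)a^2b^2}{x_2}+\frac{a^2b^2\cdot a^4b^4}{x_2^2}=a^2b^2+(a^4+b^4)x_0+a^2b^2x_0^2,
\]
exactly the target. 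So your plan is sound; only the arithmetic needed tightening.
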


\begin{proof}
We work with homogeneous coordinates $(x,y,z)$. 
Consider the following transformation:
$$
\begin{pmatrix}
x\\
y\\
z
\end{pmatrix}
:=
\begin{pmatrix}
0 & 0 & 1\\
0 & 1 & 0\\
\frac{1}{a^{\mathstrut 2}b^2} & 0 & 0
\end{pmatrix}
\cdot
\begin{pmatrix}
X\\
Y\\
Z
\end{pmatrix}
$$
The the point $(x,y,z)$ belongs to the homogenized curve $\Caabb$ if and only if
the point $(X,Y,Z)$ belongs to the curve  $Y^2X =a^2b^2Z^3+(a^4+b^4)XZ^2+a^2b^2 X^2Z$.
Hence, by dehomogenizing, we obtain the curve 
$y^2x =a^2b^2+(a^4+b^4)x+a^2b^2 x^2$, which is equivalent to $\Caabb$,
where the rational point $(x_2,y_2)$ belongs to $\Caabb$ if and only if there is 
a rational $y'$ such that $(x_0,y')$ belongs to $y^2x =a^2b^2+(a^4+b^4)x+a^2b^2 x^2$.
\end{proof}

Let $x_0=\frac{p^2}{q^2}$ be a rational square and assume that~$x_0$ 
is the $x$-coordinate of a rational point on
$y^2x=a^2b^2+(a^4+b^4)x+a^2b^2 x^2$. Then, by dividing through $x_0$ and clearing 
quadratic denominators we obtain
$$a^2b^2\cdot q^4+(a^4+b^4)\cdot p^2\cdot q^2+a^2b^2\cdot p^4\ =\ \Box\,,$$
and since
$$a^2b^2\cdot q^4+(a^4+b^4)\cdot p^2\cdot q^2+a^2b^2\cdot p^4\;=\;
(a^2q^2+b^2p^2)\cdot (a^2p^2+b^2q^2)\,,$$
this is surely the case when
\begin{equation}
a^2q^2+b^2p^2\;=\;\Box\qquad\text{and}\qquad
a^2p^2+b^2q^2\;=\;\Box\,.\label{eq:28}  
\end{equation}

\begin{lem}\label{lem:main28}
Let $P=(x_1,y_1)$ be a rational point on $\Caabb$ and let~$x_2$ be
the $x$-coordinate of the point $2*P$. 
Then $x_0:=\frac{a^2b^2}{x_2}=\frac{p^2}{q^{\mathstrut 2}}$, 
where $p$ and $q$ {satisfy\/}~{\rm (\ref{eq:28})}.
\end{lem}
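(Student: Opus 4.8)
The plan is to read the claim off the explicit doubling formula on $\Caabb$. The starting point is that $\Caabb$ has all its $2$-torsion rational: since $x^2+(a^4+b^4)x+a^4b^4=(x+a^4)(x+b^4)$, we may write $\Caabb:\ y^2=x(x+a^4)(x+b^4)$, with $2$-torsion points $(0,0),(-a^4,0),(-b^4,0)$. For a curve of this shape the numerator of the $x$-coordinate of a doubled point collapses to a perfect square: writing $P=(x_1,y_1)$ and applying the chord--tangent law, one records the identities
$$x_2=\Bigl(\frac{x_1^2-a^4b^4}{2y_1}\Bigr)^{2},\qquad
x_2+a^4=\Bigl(\frac{x_1^2+2a^4x_1+a^4b^4}{2y_1}\Bigr)^{2},\qquad
x_2+b^4=\Bigl(\frac{x_1^2+2b^4x_1+a^4b^4}{2y_1}\Bigr)^{2}.$$
The first is the general fact $x(2*P)=\tfrac{(x_1^2-B)^2}{4y_1^2}$ for a curve $y^2=x^3+Ax^2+Bx$; the other two come from translating a root to the origin, or one may simply cite that $2*P$ lies in the image of multiplication-by-$2$, which on a curve with full rational $2$-torsion forces each of $x_2$, $x_2+a^4$, $x_2+b^4$ to be a square in $\Q$.

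First I would use the identity for $x_2$: it is a nonzero rational square (nonzero since $2*P\ne\nO$, which holds in our setting because $P$ has infinite order), hence $x_0=\dfrac{a^2b^2}{x_2}=\dfrac{(ab)^2}{x_2}$ is again the square of a rational number, say $x_0=\dfrac{p^2}{q^2}$ with positive integers $p,q$. Then $x_2=\dfrac{a^2b^2q^2}{p^2}$, and substituting this into the remaining two identities yields
$$x_2+a^4=\frac{a^2b^2q^2+a^4p^2}{p^2}=\frac{a^2(a^2p^2+b^2q^2)}{p^2},\qquad
x_2+b^4=\frac{a^2b^2q^2+b^4p^2}{p^2}=\frac{b^2(a^2q^2+b^2p^2)}{p^2}.$$
Since the left-hand sides are rational squares while $a^2/p^2$ and $b^2/p^2$ are squares, both $a^2p^2+b^2q^2$ and $a^2q^2+b^2p^2$ are squares of rationals; being positive integers, they are perfect squares. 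This is precisely condition~(\ref{eq:28}) for the pair $(p,q)$, which, together with $x_0=\dfrac{p^2}{q^2}$, is the assertion of the lemma. In particular this refines the factorization noted just before the lemma: not only is the product $(a^2q^2+b^2p^2)(a^2p^2+b^2q^2)$ a square, each factor is.

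The only real work is justifying the three doubling identities --- equivalently, that $x_2$ and its shifts by $a^4$ and $b^4$ are squares. This is classical, being exactly the mechanism of complete $2$-descent on an elliptic curve with full rational $2$-torsion, so one may quote it; but to keep the argument self-contained I would verify it by expanding the chord--tangent doubling formula directly, which is short here because for $y^2=x^3+Ax^2+Bx$ the numerator of $x(2*P)$ equals $(x_1^2-B)^2$ identically. The only caveat is the degenerate case $2*P=\nO$ (then $x_2$ is not defined) and configurations making $x_0$ non-positive; neither arises for a point of infinite order, which is the only case used in the proof of {\THM}~\ref{thm:main28}.
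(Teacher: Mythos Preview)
Your argument is correct and coincides with the paper's: the paper writes $x_2=(x_1^2-a^4b^4)^2/(2y_1)^2$ and then verifies by direct expansion that $a^2q^2+b^2p^2=a^2(x_1^2+2b^4x_1+a^4b^4)^2$ and $a^2p^2+b^2q^2=b^2(x_1^2+2a^4x_1+a^4b^4)^2$, which are exactly your identities for $x_2+b^4$ and $x_2+a^4$ rewritten via $x_2=a^2b^2q^2/p^2$. Your packaging in terms of full rational $2$-torsion (each of $x_2$, $x_2+a^4$, $x_2+b^4$ a square for a doubled point) is a pleasant conceptual gloss on the same computation.
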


\begin{proof}
By Silverman and Tate~\cite[p.27]{SilvermanTate}, 
$$x_2=\frac{(x_1^2-B)^2}{(2y_1)^{\mathstrut 2}}\qquad\text{for $B:=a^4b^4$,}$$
and therefore
$$x_0\;=\;\frac{a^2b^2}{x_2}\;=\;\frac{a^2b^2(2y_1)^2}{(x_1^2-B)^{\mathstrut 2}}\;=\;
\frac{a^2b^2\bigl(4x_1^3+4 Ax_1^2+4 B x_1\bigr)}{(x_1^2-B)^{\mathstrut 2}}\;=\;
\frac{p^2}{q^{\mathstrut 2}}\qquad\text{for $A:=a^4+b^4$.}$$
Now, for $p$ and $q$ (with $a=m^2-n^2$ and $b=2mn$) we obtain
$$
a^2q^2+b^2p^2\;=\;a^2\bigl(a^4b^4+2b^4 x_1+x_1^2\bigr)^2\;=\;\Box
$$
and
$$
a^2p^2+b^2q^2\;=\;b^2\bigl(a^4b^4+2a^4 x_1+x_1^2\bigr)^2\;=\;\Box
$$
which completes the proof.
\end{proof}

The next result gives a relation between rational points on $\Caabb$ with
quadratic $x$-coordinate and {\pps} $(k,l)$ such that $(a^2 k,b^2 l)$ is a {\pp}.

\begin{lem}\label{lem:final28} Every {\pp} $(k,l)$ such that $(a^2 k,b^2 l)$ 
corresponds to a rational point on $\Caabb$ with quadratic $x$-coordinate,
and vice versa.
\end{lem}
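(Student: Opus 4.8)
The plan is to make the correspondence explicit and reduce both directions to the factored identities already used before Lemma~\ref{lem:main28}. Write $\Caabb$ as $y^2=x(x+a^4)(x+b^4)$. To a {\pp} $(k,l)$ for which $(a^2k,b^2l)$ is again a {\pp} — i.e.\ with $k^2+l^2=\Box$ and $a^4k^2+b^4l^2=\Box$ — I assign the point of $\Caabb$ with $x$-coordinate $x_2:=\frac{a^4k^2}{l^2}=\bigl(\tfrac{a^2k}{l}\bigr)^2$; conversely, to a rational point of $\Caabb$ with quadratic $x$-coordinate $x_2=\frac{p^2}{q^2}$ I assign the pair $(k,l):=(p,a^2q)$. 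I would then check that the two assignments are mutually inverse, up to the usual rescaling of {\pps}.

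For the first direction one only needs that $x_2$ is the $x$-coordinate of a rational point. Since
$$
x_2+a^4=\frac{a^4(k^2+l^2)}{l^2}=\Box\qquad\text{and}\qquad x_2+b^4=\frac{a^4k^2+b^4l^2}{l^2}=\Box
$$
by the two hypotheses, $x_2(x_2+a^4)(x_2+b^4)$ is a product of three rational squares, hence a square, and $x_2$ is quadratic. In particular $x_2$, $x_2+a^4$, $x_2+b^4$ are all squares, so the resulting point is a doubled point of $\Caabb$, in accordance with Lemma~\ref{lem:main28}. (One may also route this through the earlier lemmas: with $p:=bl$, $q:=ak$ one has $a^2q^2+b^2p^2=a^4k^2+b^4l^2=\Box$ and $a^2p^2+b^2q^2=a^2b^2(k^2+l^2)=\Box$, which is (\ref{eq:28}); hence $\frac{p^2}{q^2}$ lies on $y^2x=a^2b^2+(a^4+b^4)x+a^2b^2x^2$ by the computation before Lemma~\ref{lem:main28}, and Lemma~\ref{lem:trans28} carries it back to the point of $\Caabb$ with $x$-coordinate $\frac{a^2b^2q^2}{p^2}=\frac{a^4k^2}{l^2}$.)

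For the converse, take $(x_2,y_2)\in\Caabb(\Q)$ with $x_2=\frac{p^2}{q^2}$, $\gcd(p,q)=1$ and $p\neq 0$. From $y_2^2=x_2(x_2+a^4)(x_2+b^4)$, after clearing the denominator $q^6$, one gets $(p^2+a^4q^2)(p^2+b^4q^2)=\Box$. If in addition each factor is separately a square, then $(k,l):=(p,a^2q)$ satisfies $k^2+l^2=p^2+a^4q^2=\Box$ and $(a^2k)^2+(b^2l)^2=a^4(p^2+b^4q^2)=\Box$, so $(k,l)$ is a {\pp} with $(a^2k,b^2l)$ a {\pp}, and the first construction sends it back to $(x_2,y_2)$. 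Modulo this one implication the lemma is done.

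That implication is the crux: passing from ``the product $(p^2+a^4q^2)(p^2+b^4q^2)$ is a square'' to ``both factors are squares'', equivalently from ``$x_2$ quadratic'' to ``$x_2$, $x_2+a^4$, $x_2+b^4$ all quadratic''. A short computation gives $\gcd(p^2+a^4q^2,p^2+b^4q^2)\mid a^4-b^4=(a^2-b^2)(a^2+b^2)$; since $a^2+b^2=\Box$, the common square-free part $d$ divides that of $a^2-b^2$ and each of its primes is $\equiv1\pmod 4$ — but this by itself does not force $d=1$. The way I would finish, and the form in which I would state the lemma, is to observe that the rational points of $\Caabb$ with quadratic $x$-coordinate are, apart from the finitely many torsion points of that shape, exactly the doubled points $2P$ of Lemma~\ref{lem:main28}; for a doubled point both $x_2+a^4$ and $x_2+b^4$ are squares by the standard $2$-descent description of $2\Caabb(\Q)$, which yields $d=1$. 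With ``quadratic $x$-coordinate'' read in this doubled-point sense — which is all Theorem~\ref{thm:main28} needs — the correspondence is a genuine bijection and only the routine computations above remain.
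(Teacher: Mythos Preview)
Your route is genuinely different from the paper's. The paper passes through the auxiliary curve of Lemma~\ref{lem:trans28}, obtains $x_0=\frac{a^2b^2}{x_2}=\frac{p^2}{q^2}$ with $p,q$ satisfying~(\ref{eq:28}), and then unwinds the rational parametrisation of Pythagorean triples via a parameter $t=\frac rs$ to produce $k=r^2-s^2$, $l=2rs$. You work instead with the factored form $y^2=x(x+a^4)(x+b^4)$ and the explicit assignment $(k,l)\mapsto x_2=\frac{a^4k^2}{l^2}$, which makes the forward direction a two-line check and avoids the auxiliary curve and the $t$-parametrisation altogether. Your alternative phrasing of the forward direction via $p:=bl$, $q:=ak$ and~(\ref{eq:28}) shows precisely how the two arguments match up.

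On the converse you put your finger on exactly the point the paper glosses over: from $x_2=\Box$ one only gets that $(p^2+a^4q^2)(p^2+b^4q^2)$ is a square, not that each factor is. The paper's proof silently invokes Lemma~\ref{lem:main28}\,---\,i.e.\ it implicitly treats $x_2$ as the $x$-coordinate of a \emph{doubled} point\,---\,to obtain~(\ref{eq:28}) separately; your proposal to restrict the statement to doubled points (which is indeed all Theorem~\ref{thm:main28} uses) makes this honest. One caution: your stronger assertion that the rational points with quadratic $x$-coordinate are, apart from torsion, \emph{exactly} the doubled points is not justified and need not hold in general\,---\,the kernel of the component $P\mapsto x(P)\in\Q^{*}/\Q^{*2}$ of the $2$-descent map can strictly contain $2\Caabb(\Q)$\,---\,so you should not lean on it. But since you only need the inclusion $2\Caabb(\Q)\subseteq\{x=\Box\}$ and the fact that doubled points have all three of $x_2$, $x_2+a^4$, $x_2+b^4$ square, your argument for the application stands.
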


\begin{proof}
Let $x_2=\Box$ be the $x$-coordinate of a rational point on
$\Caabb$. Then, by {\LEM}\;\ref{lem:main28}, 
$\frac{a^2b^2}{x_2}=\frac{p^2}{q^{\mathstrut 2}}$, 
where $p$ and $q$ satisfy~(\ref{eq:28}), {\ie},
$a^2q^2+b^2p^2=\Box$. So, $\frac{a^2}{b^{\mathstrut 2}}+
\frac{p^2}{q^{\mathstrut 2}}=\rho^2$ for some $\rho\in\Q$.
In other words, we have 
$$\Bigl(\frac ab\Bigr)^2+\Bigl(\frac pq\Bigr)^2\;=\;\rho^2\,,$$
which implies that $$\frac ab=\frac{2\rho t}{t^2+1}\qquad\text{and}\qquad
\frac pq=\frac{\rho (t^2-1)}{t^2+1}\qquad\text{for some $t\in\Q$.}$$
In particular, we have $$\rho\;=\;\frac{a\cdot(t^2+1)}{b\cdot(2t)}\,.$$
Now, since $a^2p^2+b^2q^2=\Box$, we have $\bigl(\frac ab\bigr)^2+\bigl(\frac qp\bigr)^2=\Box$, 
hence, $\frac{a^2}{b^2}+\frac{(t^2+1)^2}{\rho^{\mathstrut 2} (t^2-1)^2}=\Box$, which implies that
$$a^4\cdot(t^2-1)^2+b^4\cdot (2t)^2\;=\;\Box\,.$$ For $t=\frac rs$, we obtain
$$\frac{a^4\cdot(r^2-s^2)^2}{s^4}+\frac{b^4\cdot 4r^2}{s^2}\;=\;\Box\,,$$
which implies that 
$$a^4\cdot(r^2-s^2)^2+b^4\cdot(2rs)^2\;=\;\Box\,,$$ and for $k:=r^2-s^2$, $l:=2rs$, we finally
obtain $$(a^2k)^2+(b^2 l)^2\;=\;\Box\qquad\text{where $k^2+l^2=\Box$}\,,$$
which shows that $(a,b)$ is a {\qppp}.

Assume now that we find a {\pp} $(k,l)$ such that $(a^2 k,b^2 l)$ is a {\pp}. 
Without loss of generality we may assume that $k$ and $l$ are relatively prime.
Thus, we find relatively prime positive integers $r$ and $s$ such that 
$k=r^2-s^2$ and $l=2rs$. With $t:=\frac rs$, $a$, and $b$, we can compute $p$ and
$q$, and finally obtain a rational point on $\Caabb$ whose $x$-coordinate is a square.
\end{proof}

We are now ready for the

\begin{proof}[Proof of Theorem\;\ref{thm:main28}]
For every rational point $P$ on $\Caabb$ with quadratic $x$-coordinate
let $(k_P,l_P)$ be the corresponding {\pp}. 
By {\LEM}\;\ref{lem:final28} it is enough to show that $(k_P,l_P)$
is a multiple of $(a,b)$ if and only if $P$ is a torsion point.
Notice that if $P$ is a point of infinite order, then for every
integer $i$, $2i*P$ is a rational point on $\Caabb$ with
quadratic $x$-coordinate, and not all of the corresponding {\pp}
$(k_{2i*P},l_{2i*P}$ can be multiples of $(a,b)$.

Let us consider the $x$-coordinates of the torsion points on 
the curve $\Caabb$. For simplicity, we consider the $16$
torsion points on the equivalent curve 
$$y^2=\frac{a^2b^2}{x}+(a^4+b^4)+a^2b^2 x\,.$$
The two torsion points at infinity are $(0,1,0)$ (which is the neutral
element of the group) and $(1,0,0)$ (which is a point of order~$2$). 
The other two points of order~$2$ are $(-\frac{a^2}{b^2},0)$ and
$(-\frac{b^2}{a^2},0)$, and the two points of order~$4$ are 
$\bigl(1,\pm(a^2+b^2)\bigr)$. The $x$-coordinates of the other~$10$
torsion points are $\frac{m(m+n)}{n(m-n)}$, $\frac{n(m-n)}{m(m+n)}$, 
$-\frac{m(m-n)}{n(m+n}$, $-\frac{n(m+n)}{m(m-n)}$, and $-1$.
Obviously, $-1$, $-\frac{a^2}{b^2}$, and $-\frac{b^2}{a^2}$ are not
squares of rational numbers. Furthermore, $0$ would lead to $p=0$, $q=1$, $t=1$,
$r=1$, $s=0$, $k=1$ and $l=0$, and therefore, $(k,l)$ is not a {\pp}.
If $\frac{m(m+n)}{n(m-n)}=\Box$, then, by multiplying with $n^2(m-n)^2$,
also $mn(m^2-n^2)=\Box$, which would imply that $A:=mn(m^2-n^2)$ is a 
congruent number with $A=\Box$. But this is impossible, since otherwise $1$ would 
be a congruent number, which is not the case (see also~\cite[Oeuvres, I, p.\,340]{fermat-oevres}
or~\cite[p.\,163]{zeuthen} for  an annotated version of Fermat's proof).
Similarly, one can show 
that also $\frac{n(m-n)}{m(m+n)}$, $-\frac{m(m-n)}{n(m+n}$ and 
$-\frac{n(m+n)}{m(m-n)}$ cannot be squares. Thus, the only value of
$x$-coordinates of torsion points on the curve $\Caabb$ which is a square
is \hbox{$x=1$.} This leads to $k=2b$ and $l=2a$, {\ie}, to the {\pp} $(2b,2a)$, which
is a multiple of $(a,b)$\,---\,notice that for $c:=a^2+b^2$, $(2a^2b)^2+
(2ab^2)^2=(2abc)^2$.
\end{proof}

\begin{cor}
If $(a,b)$ is a {\qppp}, then there are 
infinitely many {\pps} $(k,l)$, not multiples of each other, such that
$(ak,bl)$ is a {\pp}.
\end{cor}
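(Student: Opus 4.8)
The plan is to read off the required {\pps} from the even multiples of a point of infinite order on $\Caabb$. Since $(a,b)$ is a {\qppp}, {\THM}\;\ref{thm:main28} tells us that $\Caabb$ has positive rank, so I fix a rational point $P$ of infinite order. For each $i\ge 1$ the point $2i*P=2*(i*P)$ is again a rational point on $\Caabb$, and by the duplication formula used in {\LEM}\;\ref{lem:main28} its $x$-coordinate is a rational square; moreover that square is a nonzero rational, since $2i*P$ is not a torsion point (in particular it is neither the point at infinity nor the $2$-torsion point with $x$-coordinate $0$). Hence {\LEM}\;\ref{lem:final28} produces a {\pp} $(k_i,l_i)$ with $(a^2k_i,b^2l_i)$ a {\pp}. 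It then remains to show that the pairs $(k_i,l_i)$, $i\ge 1$, fall into infinitely many distinct proportionality classes.

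The key step is that the correspondence of {\LEM}\;\ref{lem:final28} is reversible up to harmless symmetries: from a {\pp} $(k,l)=(r^2-s^2,2rs)$ one recovers $t=\tfrac rs$ (only the replacement $t\mapsto-\tfrac1t$, which merely flips the signs of $k$ and $l$, is left undetermined); from $t$ one gets $\tfrac pq=\tfrac{a(t^2-1)}{2bt}$ up to sign; and from $\tfrac pq$ one gets $x_0=\bigl(\tfrac pq\bigr)^2$, hence $x_2=\tfrac{a^2b^2}{x_0}$, uniquely. Consequently, two rational points on $\Caabb$ with square $x$-coordinate whose associated pythagorean pairs are proportional must share the same $x$-coordinate. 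Since $P$ has infinite order, $2i*P=\pm\,2j*P$ forces $i=\pm j$, so the points $2*P,4*P,6*P,\dots$ have pairwise distinct $x$-coordinates, and therefore $(k_1,l_1),(k_2,l_2),(k_3,l_3),\dots$ lie in pairwise distinct proportionality classes; this produces the infinitely many {\pps} asserted.

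The hard part is the reversibility claim: I would have to verify that unwinding the chain of substitutions $x_2,\;x_0,\;\tfrac pq,\;t,\;(k,l)$ appearing in the proof of {\LEM}\;\ref{lem:final28} gives a well-defined map on proportionality classes, with no ambiguities beyond the sign symmetries noted above. If an explicit inverse turns out to be awkward to pin down, the same conclusion follows by pigeonhole: were there only finitely many proportionality classes among the $(k_i,l_i)$, a single class would arise from infinitely many indices $i$, and the reconstruction above would then force $x(2i*P)$ to take one fixed value for all those $i$, contradicting the infinite order of $P$. Either way, the corollary follows.
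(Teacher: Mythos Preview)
Your argument is correct and follows the same route as the paper's: obtain a point $P$ of infinite order via {\THM}\;\ref{thm:main28}, apply {\LEM}\;\ref{lem:final28} to the points $2i*P$ (whose $x$-coordinates are nonzero rational squares by the duplication formula), and conclude that the resulting {\pps} represent infinitely many proportionality classes. The only difference is in that last step: the paper argues loosely that each pair can be a multiple of only finitely many others, whereas you make the sharper observation that the construction in {\LEM}\;\ref{lem:final28} is reversible on proportionality classes\,---\,indeed, unwinding the substitutions gives $x_2=b^4\bigl(l/k\bigr)^2$\,---\,so the pairwise distinct $x$-coordinates of the $2i*P$ yield pairwise non-proportional pairs directly.
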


\begin{proof}
By {\THM}\;\ref{thm:main28}, there exists a point~$P$
on $\Caabb$ of infinite order. Now, for every
integer $i$, $2i*P$ is a rational point on $\Caabb$ with
quadratic $x$-coordinate, and each of the corresponding {\pps}
$(k_{2i*P},l_{2i*P})$ can be a multiple of just finitely many other
such {\pp}. Thus, there are infinitely many integers~$j$,
such that the {\pps} $(k_{2j*P},l_{2j*P})$ are not 
multiples of each other.
\end{proof}

\begin{alg}{\algsty
The following algorithm decribes how to construct {\pps} $(k,l)$ from rational
points on $\Caabb$ of infinite order.
\begin{itemize}
\item Let $P$ be a rational point on $\Caabb$ of infinite order and
let $x_2$ be the $x$-coordinate of $2*P$.
\item Let $p$ and $q$ be relatively prime positive integers such
that $$\frac{q}{p}\;=\;\frac{\sqrt{x_2}}{ab}\,.$$
\item Let $r$ and $s$ be relatively prime positive integers such
that $$\frac rs\;=\;\frac{bp+\sqrt{a^2q^2+b^2p^2}}{aq}\,.$$
\item Let $k:=r^2-s^2$ and let $l:=2rs$.
\end{itemize}
Then $(a^2 k,b^2 l)$ is a {\pp}. 
}
\end{alg}


\section{Double-Pythapotent Pairs}

Below we consider {\dppps}. As above, we first show
that the curve $\Cab$ has torsion group isomorphic to $\tor 2 4$, and 
then we show how we obtain {\pps} $(k,l)$ from a point on $\Cab$ with quadratic
$x$-coordinate such that $(ak,bl)$ is a {\pp}. Since the calculations are similar,
we shall omit the details.

\begin{prop} If $(a,b)$ is a {\pp}, then the elliptic curve 
$$
\Cab:\hspace*{3ex} y^2\ =\ x^3+(a^2+b^2)x^2+a^2b^2x\,,
$$
has torsion group $\tor 2 4$.
\end{prop}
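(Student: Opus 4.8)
The plan is to mimic the strategy used for $\Caabb$: first exhibit the torsion points forcing $\tor 2 4 \hookrightarrow \Cab(\Q)_{\mathrm{tors}}$, then invoke a structural fact (Kubert's parametrization, or Mazur's theorem plus a counting argument) to show there is nothing more.

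First I would locate the obvious torsion. Writing $c := a^2+b^2$, so that $c$ is a perfect square since $(a,b)$ is a {\pp}, the curve factors as $y^2 = x(x^2 + c\,x + a^2b^2) = x(x+\alpha)(x+\beta)$ where $\alpha,\beta$ are the roots of $t^2 + (a^2+b^2)t + a^2b^2 = (t+a^2)(t+b^2)$, i.e.\ $\alpha = a^2$, $\beta = b^2$. Hence $\Cab$ has full rational $2$-torsion: the points $(0,0)$, $(-a^2,0)$, $(-b^2,0)$ together with $\nO$ give a subgroup $\cong \tor 2 2$. This already places a factor $\Z/2\Z$ in each of the two cyclic components. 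Next I would produce a rational point of order $4$. Since $a^2+b^2 = c^2 = \Box$, one checks that $x = ab$ is a candidate: then $x^3 + (a^2+b^2)x^2 + a^2b^2 x = ab\,(a^2b^2 + (a^2+b^2)ab + a^2b^2) = ab\cdot ab\,(ab + a^2 + b^2 + ab) = a^2b^2(a+b)^2(\,$wait, $(a^2 + 2ab + b^2) = (a+b)^2\,)$, so $y^2 = a^2b^2(a+b)^2$ and $(ab, ab(a+b))$ is a rational point $Q$. A direct computation with the doubling formula (or the standard criterion that $2Q$ has $x$-coordinate $\bigl(\frac{x_1^2 - a^2b^2}{2y_1}\bigr)^2$, cf.\ the formula from Silverman--Tate used in {\LEM}\;\ref{lem:main28}) shows $2Q = (0,0)$, the $2$-torsion point which is \emph{not} a difference of the other two; hence $Q$ has order $4$. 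Thus $\tor 2 4 \hookrightarrow \Cab(\Q)_{\mathrm{tors}}$.

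For the reverse inclusion I would argue that no larger torsion group is possible. By Mazur's theorem the only finite groups containing $\tor 2 4$ that can occur are $\tor 2 4$, $\tor 2 6$, and $\tor 2 8$. To rule out the larger two it suffices to show $\Cab$ has no rational point of order $3$ (which would be forced if the torsion were $\tor 2 6$) and no rational point of order $8$. Alternatively, and more in the spirit of the paper, I would note that $\Cab$ is already in the Tate normal form $y^2 = x^3 + (a^2+b^2)x^2 + a^2b^2x$ with full $2$-torsion, and compare it against Kubert's parametrization of curves with torsion $\tor 2 8$ (the one quoted in the previous section): a curve of that latter type, put in the form $y^2 = x^3 + \ta x^2 + \tb x$, has $\tb$ a fourth power times a square-ish expression forcing relations that, for $\tb = a^2b^2$ with $(a,b)$ a {\pp} not of the degenerate shape, are not satisfiable; so generically the torsion cannot grow to $\tor 2 8$, and the $\tor 2 6$ case is excluded because a rational $3$-torsion point on $y^2 = x(x+a^2)(x+b^2)$ would force $a^2$, $b^2$, $a^2 - b^2$ into a configuration that (via descent, or via the congruent-number obstruction already invoked in the proof of {\THM}\;\ref{thm:main28}) is impossible for positive integers.

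The main obstacle is the reverse inclusion: showing the torsion is \emph{exactly} $\tor 2 4$ and does not enlarge to $\tor 2 6$ or $\tor 2 8$. The cleanest route is probably to exhibit, as the authors did for $\Caabb$, an explicit birational model of $\Cab$ on which all torsion points of a hypothetical larger group would have to appear, list the finitely many candidate $x$-coordinates, and show each would force either an impossible rational square (e.g.\ $-a^2 = \Box$ or $-1 = \Box$) or a forbidden congruent-number identity such as $mn(m^2-n^2) = \Box$; this reduces everything to the already-cited facts that $1$ is not a congruent number and that $a^4 + b^4 \neq \Box$. Since the authors announce that "the calculations are similar" and defer the details, I would expect the final writeup to simply cite the analogous computation from Section 2 and verify the three order-$2$ points plus the order-$4$ point $(ab, ab(a+b))$ explicitly.
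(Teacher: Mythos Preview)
Your approach differs from the paper's. You build the inclusion $\tor 2 4 \hookrightarrow \Cab(\Q)_{\mathrm{tors}}$ by hand---full $2$-torsion from the factorization $x(x+a^2)(x+b^2)$, plus the explicit order-$4$ point $\bigl(ab,\,ab(a+b)\bigr)$---and then propose to bound from above via Mazur. The paper instead goes directly through Kubert's parametrization: it takes the one-parameter family for torsion $\tor 2 4$, brings it by a rational transformation to the shape $y^2 = x^3 + 2(16v^2+1)\,x^2 + (16v^2-1)^2\, x$, and then checks that the choice $v = p/q$ with $p := \tfrac18(a-b)$, $q := \tfrac12(a+b)$ (where $a=m^2-n^2$, $b=2mn$) yields a curve equivalent to $\Cab$. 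No individual torsion points are ever written down, and the upper bound is absorbed into the citation of Kubert.

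Two remarks on your plan. First, a slip: $\tor 2 6$ does \emph{not} contain $\tor 2 4$ (it has no element of order $4$), so once you have an order-$4$ point together with full $2$-torsion, Mazur already leaves only $\tor 2 4$ and $\tor 2 8$; your discussion of excluding a rational $3$-torsion point is unnecessary. Second, and more substantively, you never actually exclude $\tor 2 8$. The sketches you offer (``relations that \ldots\ are not satisfiable'', or listing hypothetical $8$-torsion $x$-coordinates and invoking a congruent-number obstruction) are not carried out. Concretely, one must show that none of the four order-$4$ points lies in $2\,\Cab(\Q)$, which unwinds to checking that quantities such as $ab$, $a(a+b)$, $b(a+b)$ (and their companions for the other order-$4$ points) are not simultaneously rational squares; the fact $ab\neq\Box$ that the paper invokes later (in the proof of Theorem~\ref{thm:main24}, via Fr\'enicle) is exactly the kind of input needed here, but you would still have to assemble the full argument. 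The paper's route via Kubert sidesteps this casework entirely. (In fairness, the paper also does not explicitly verify that its chosen parameter $v$ avoids the $\tor 2 8$ sublocus of Kubert's family, so neither argument is completely airtight on this last point; but the paper's version is a single clean substitution rather than an open-ended exclusion.)
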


\begin{proof}
Kubert~\cite[p.\,217]{Kubert} gives the following parametrization for elliptic curves
with torsion group $\tor 2 4$: $$y^2+xy-ey\ =\ x^3-ex^2$$
for $$e=v^2-\tfrac 1{16}\qquad\text{where $v\neq 0,\;\pm\tfrac 14$}\,.$$
After a rational transformation we obtain the curve 
$$y^2\ =\ x^3+\ta x^2+\tb x$$
with $$\ta=2\cdot(16v^2+1)\quad
\text{and}\quad\tb=(16v^2-1)^2\,.$$
For $v=\frac pq$, $a=m^2-n^2$, $b=2mn$, let $p:=\frac 18(a-b)$ and $q:=\frac 12(a+b)$.
Then the curve $y^2+xy-ey=x^3-ex^2$ is equivalent to the curve 
$$\Cab:\hspace{3ex}
y^2\ =\ x^3+(a^2 +  b^2)x^2+a^2b^2x\,.$$
\end{proof}
   
\begin{rmk}
{\rmksty Notice that there are $p$ and $q$ which are not of the above form,
which implies that there are curves with torsion group $\tor 2 4$ which 
are {\it not\/} equivalent to some curve~$\Cab$.}
\end{rmk}

\begin{thm}\label{thm:main24} 
The {\pp} $(a,b)$ is a {\dppp} if and only if the elliptic curve $\Cab$ 
has positive rank.
\end{thm}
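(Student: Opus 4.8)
The plan is to run, almost line for line, the argument that proved Theorem~\ref{thm:main28}, with $\Caabb$ replaced by $\Cab$ and the exponent~$2$ on $a,b$ erased throughout; the only genuinely new ingredient is the torsion classification for the group $\tor 2 4$, and that turns out to be easier than for $\tor 2 8$. First I would set up the computational model as in Lemma~\ref{lem:trans28}: the projective substitution used there also carries $\Cab$ to the curve $y^2x=a^2b^2+(a^2+b^2)x+x^2$, under which a rational point of $\Cab$ with $x$-coordinate $x_2$ acquires $x$-coordinate $x_0=a^2b^2/x_2$. If $x_0=p^2/q^2$ is a rational square, clearing denominators in the curve equation gives $a^2b^2q^4+(a^2+b^2)p^2q^2+p^4=\Box$, and because
\[
a^2b^2q^4+(a^2+b^2)p^2q^2+p^4\;=\;(a^2q^2+p^2)(b^2q^2+p^2)
\]
this holds in particular when $a^2q^2+p^2=\Box$ and $b^2q^2+p^2=\Box$. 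The analogue of Lemma~\ref{lem:main28} then says that a doubled point produces such a pair: if $P=(x_1,y_1)$ is on $\Cab$ and $x_2$ is the $x$-coordinate of $2*P$, then by the doubling formula $x_2=(x_1^2-a^2b^2)^2/(2y_1)^2$, so with $p:=2aby_1$ and $q:=x_1^2-a^2b^2$ one has $x_0=p^2/q^2$, and from $y_1^2=x_1^3+(a^2+b^2)x_1^2+a^2b^2x_1$ one verifies the (routine) identities
\[
a^2q^2+p^2=a^2\bigl(x_1^2+2b^2x_1+a^2b^2\bigr)^2,\qquad
b^2q^2+p^2=b^2\bigl(x_1^2+2a^2x_1+a^2b^2\bigr)^2.
\]

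Next I would prove the analogue of Lemma~\ref{lem:final28}. From $a^2q^2+p^2=\Box$ one gets $(p/q)^2+a^2=\rho^2$ for some $\rho\in\Q$, hence $p/q=a(t^2-1)/(2t)$ for some $t\in\Q$; feeding this into $(p/q)^2+b^2=\Box$ and writing $t=r/s$ yields $a^2(r^2-s^2)^2+b^2(2rs)^2=\Box$, i.e. $(ak)^2+(bl)^2=\Box$ with $k:=r^2-s^2$ and $l:=2rs$, where moreover $k^2+l^2=(r^2+s^2)^2=\Box$. Conversely, given a coprime {\pp} $(k,l)$ with $(ak,bl)$ a {\pp}, writing $k=r^2-s^2$, $l=2rs$ and reversing this computation produces a rational point of $\Cab$ whose $x$-coordinate is the square $(ak/l)^2$. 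Thus {\pps} $(k,l)$ with $(ak,bl)$ a {\pp} correspond to rational points of $\Cab$ with quadratic $x$-coordinate, and vice versa. (Observe also that such a $(k,l)$ can never be a multiple of $(a,b)$, since $(\lambda a^2)^2+(\lambda b^2)^2$ and $2\lambda^2a^2b^2$ are never squares --- the former by Fermat's $a^4+b^4\neq\Box$ --- so the word ``another'' in the definition of {\dppp} is automatic here.)

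To prove the theorem I would then mimic the proof of Theorem~\ref{thm:main28}. If $\Cab$ has a point $P$ of infinite order, then $2*P$ has quadratic $x$-coordinate and the corresponding {\pp} $(k,l)$ is genuine: it could degenerate ($k=0$ or $l=0$) only if $x(2*P)\in\{0,\infty\}$, i.e. $2*P\in\{(0,0),O\}$, which is impossible since $P$ has infinite order; so positive rank forces $(a,b)$ to be a {\dppp}. For the converse, the eight torsion points of $\Cab$ are $O$, the $2$-torsion points $(0,0),(-a^2,0),(-b^2,0)$, and the four points of order~$4$, which all double to $(0,0)$ and hence have $x$-coordinate $\pm ab$, namely $\bigl(ab,\pm ab(a+b)\bigr)$ and $\bigl(-ab,\pm ab(a-b)\bigr)$. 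Among the finite torsion $x$-coordinates $0,-a^2,-b^2,ab,-ab$ the only rational square is $0$, and $x_2=0$ forces $t=1$, hence $k=0$, so no torsion point gives a {\pp}; the one value needing an argument is $ab$, which is not a rational square, because $ab=\Box$ with $a=ga'$, $b=gb'$, $g=\gcd(a,b)$, $a',b'$ coprime would force $a'b'=\Box$, hence $a'=u^2$, $b'=v^2$, hence $u^4+v^4=a'^2+b'^2=\Box$, contradicting Fermat. Consequently, if $(a,b)$ is a {\dppp}, the point produced above from a witness $(k,l)$ has $x$-coordinate $(ak/l)^2\notin\{0,-a^2,-b^2,ab,-ab\}$, so it is not a torsion point and $\Cab$ has positive rank.

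The step I expect to be most delicate is the bookkeeping in the analogue of Lemma~\ref{lem:final28}: keeping track of which intermediate quantities are forced to be rational squares and which denominators may vanish, so that the degenerate branches of the construction match up exactly with the torsion points $O$ and $(0,0)$. The torsion analysis, by contrast --- which in the $\tor 2 8$ case needed a congruent-number detour to rule out the order-$8$ $x$-coordinates --- here costs nothing beyond the single observation $ab\neq\Box$.
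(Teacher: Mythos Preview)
Your proof is correct and follows the same architecture as the paper's --- transform the curve, factor the resulting quartic, show that doubling forces the two factors to be squares, convert those square conditions into a {\pp} $(k,l)$ with $(ak,bl)$ a {\pp}, and then rule out the torsion $x$-coordinates. Where you diverge is in the computational choices. You keep the \emph{same} projective substitution as in Lemma~\ref{lem:trans28} (with entry $1/(a^2b^2)$), landing on $y^2x=a^2b^2+(a^2+b^2)x+x^2$ with $x_0=a^2b^2/x_2$, whereas the paper rescales to $y^2x=ab+(a^2+b^2)x+abx^2$ with $x_0=ab/x_2$ (Lemma~\ref{lem:trans24}). Consequently your key conditions are the symmetric pair $a^2q^2+p^2=\Box$, $b^2q^2+p^2=\Box$, while the paper works with $a(aq+bp)=\Box$, $b(ap+bq)=\Box$ under the auxiliary constraint $q=\Box$, $p=ab\cdot\Box$ (equation~(\ref{eq:24})); your version is tidier and more directly parallel to the $\tor 2 8$ case. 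For the one nontrivial torsion check, that $ab$ is never a rational square for a {\pp} $(a,b)$, you reduce via $\gcd$ to Fermat's $u^4+v^4\neq\Box$; the paper instead writes $b=2mn$ and invokes Fr\'enicle's impossibility of $ab/2=2\cdot\Box$. Both routes are valid. Your side remark that a witness $(k,l)$ can never be a multiple of $(a,b)$ (again by $a^4+b^4\neq\Box$) neatly explains why the ``another'' in the definition of {\dppp} is automatic here, in contrast with the $\tor 2 8$ situation where the torsion point $x=1$ really does produce the trivial multiple $(2b,2a)$.
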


In order to prove {\THM}\;\ref{thm:main24}, we again transform the curve
$\Cab$ to a another curve on which we carry out our calculations.

\begin{lem}\label{lem:trans24} 
If $x_2$ is the $x$-coordinate of a rational point on
$\Cab$, then $$x_0:=\frac{ab}{x_2}$$ is the $x$-coordinate of a rational point on
the curve $$y^2x\ =\ ab+(a^2+b^2)x+ab x^2\,.$$
\end{lem}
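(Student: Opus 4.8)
The plan is to mimic exactly the homogeneous-coordinate trick used in Lemma~\ref{lem:trans28} for the $\tor 2 8$ case, since the curve $\Cab:\ y^2=x^3+(a^2+b^2)x^2+a^2b^2x$ has the same shape as $\Caabb$ with $a^4,b^4$ replaced by $a^2,b^2$. First I would pass to the projective closure of $\Cab$, namely $Y^2Z=X^3+(a^2+b^2)X^2Z+a^2b^2XZ^2$, and apply the linear change of coordinates that swaps the roles of $X$ and $Z$ (up to the scaling factor $1/(ab)^2$ needed to keep the equation integral). Concretely I would use
$$
\begin{pmatrix} x\\ y\\ z\end{pmatrix}
=
\begin{pmatrix} 0 & 0 & 1\\ 0 & 1 & 0\\ \tfrac{1}{a^2b^2} & 0 & 0\end{pmatrix}
\begin{pmatrix} X\\ Y\\ Z\end{pmatrix},
$$
exactly as in the proof of Lemma~\ref{lem:trans28}, the only difference being that the constant and linear coefficients of $\Cab$ are $a^2b^2$ and $a^2+b^2$ instead of $a^4b^4$ and $a^4+b^4$.

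Next I would substitute $x=z'$, $y=y'$, $z=x'/(a^2b^2)$ into the homogenized equation of $\Cab$ and clear denominators, obtaining $Y^2X=a^2b^2Z^3+(a^2+b^2)XZ^2+a^2b^2X^2Z$; dehomogenizing by setting $Z=1$ gives $y^2x=a^2b^2+(a^2+b^2)x+a^2b^2x^2$, which is the target curve. Since the transformation matrix is invertible over $\Q$, it gives a birational isomorphism between the two curves defined over $\Q$, so rational points correspond to rational points. Then I would track what happens to a given point: if $(x_2,y_2)$ lies on $\Cab$ with $x_2\neq0$, then in the new coordinates its first coordinate becomes $x_0=ab\cdot(ab/x_2)$... more precisely, reading off the matrix, the $X$-coordinate of the image is $a^2b^2$ times the $z$-coordinate, i.e. $a^2b^2\cdot 1$ in affine terms paired against the old $x$, so after dehomogenizing one gets $x_0=\dfrac{a^2b^2}{x_2}\Big/(ab)=\dfrac{ab}{x_2}$ — here I must be careful to carry through the scaling so the claimed formula $x_0=ab/x_2$ (not $a^2b^2/x_2$) comes out correctly; checking this bookkeeping is the one place a sign or a factor could slip.

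Indeed the main (and essentially only) obstacle is this last normalization: the $\tor 2 8$ lemma produces $x_0=a^2b^2/x_2$ because the coefficient $B$ in that case is $a^4b^4=(a^2b^2)^2$, whereas here the analogous quantity is $a^2b^2=(ab)^2$, so the correct substitution uses $1/(ab)^2$ in the matrix wait — one must decide whether to scale by $1/(a^2b^2)$ or by $1/(ab)$, and trace how that choice rescales the resulting curve and the point's coordinate, so that the dehomogenized curve is exactly $y^2x=ab+(a^2+b^2)x+abx^2$ and the point's $x$-coordinate is exactly $ab/x_2$. Everything else — invertibility of the transformation, the correspondence of rational points, the final form of the equation — is routine and parallels Lemma~\ref{lem:trans28} verbatim, which is presumably why the paper says ``the calculations are similar, we shall omit the details.'' I would therefore write the proof by explicitly exhibiting the transformation matrix with the factor $1/(ab)$ in the lower-left entry, verifying by direct substitution that it sends the homogenized $\Cab$ to $Y^2X=abZ^3+(a^2+b^2)XZ^2+abX^2Z$, dehomogenizing, and noting that $x_2\mapsto x_0=ab/x_2$ under this map, so that a rational point on $\Cab$ with $x$-coordinate $x_2$ yields a rational point on the new curve with $x$-coordinate $ab/x_2$.
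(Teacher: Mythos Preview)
Your proposal is correct and follows exactly the paper's approach: the paper's proof consists solely of the sentence ``follow the proof of {\LEM}\;\ref{lem:trans28}, using the transformation matrix with $\tfrac{1}{ab}$ in the lower-left entry,'' which is precisely where you land in your final paragraph. Your intermediate detour through the $\tfrac{1}{a^2b^2}$ entry and the attempted rescaling $a^2b^2/x_2\big/(ab)$ is unnecessary and not how the map actually works; just start with the $\tfrac{1}{ab}$ entry, and the substitution $(x,y,z)=(Z,Y,X/(ab))$ into the homogenized $\Cab$ gives $Y^2X=abZ^3+(a^2+b^2)XZ^2+abX^2Z$ directly, with $(x_2,y_2,1)\mapsto(ab,y_2,x_2)$ and hence $x_0=ab/x_2$.
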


\begin{proof}
We can just follow the proof of {\LEM}\;\ref{lem:trans28}, using
the transformation
$$
\begin{pmatrix}
0 & 0 & 1\\
0 & 1 & 0\\
\frac{1}{a\mathstrut b} & 0 & 0
\end{pmatrix}\,.
$$
\end{proof}

Let $x_0=\frac{p}{q}$ be the $x$-coordinate of a rational point on
$y^2x=ab+(a^2+b^2)x+ab x^2$, where $q=\tq^2$ and 
$p=ab\cdot\tp^2$ for some integers $\tq,\tp$. Then $$ab\cdot y^2\cdot\frac pq\;=\;
ab\cdot y^2\cdot\frac{ab\tp^2}{\tq^{\mathstrut 2}}\;=\;
y^2\cdot\Bigl(\frac{ab\cdot\tp}{\tq}\Bigr)^2\;=\;\Box\,.$$
Therefore, $$ab\cdot\Bigl(ab+(a^2+b^2)\cdot\tfrac pq+ab\cdot\tfrac{p^2}{q^{\mathstrut 2}}
\Bigr)\;=\;\Box\,,$$
and by clearing quadratic denominators we obtain
$$ab\cdot\bigl(aq+bp\bigr)\cdot\bigl(ap+bq\bigr)\;=\;\Box\,,$$
which is surely the case when
\begin{equation}
a\cdot(aq+bp)\;=\;\Box\qquad\text{and}\qquad
b\cdot(ap+bq)\;=\;\Box\,.\label{eq:24}  
\end{equation}

\begin{lem}\label{lem:main24}
Let $P=(x_1,y_1)$ be a rational point on $\Cab$ and let~$x_2$ be
the $x$-coordinate of the point $2*P$. 
Then $x_0:=\frac{ab}{x_2}=\frac{p}{q}$, where 
$q=\tq^2$ and $p=ab\cdot\tp^2$ for some integers $\tq,\tp$ 
and $p$ and $q$ {satisfy\/}~{\rm (\ref{eq:24})}.
\end{lem}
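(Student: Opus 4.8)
The plan is to mirror the proof of \LEM\;\ref{lem:main28} exactly, since the curve $\Cab$ and the transformation of \LEM\;\ref{lem:trans24} are the degree-one analogues of the quadratic case. First I would invoke the duplication formula of Silverman and Tate~\cite[p.\,27]{SilvermanTate}: for a point $P=(x_1,y_1)$ on $\Cab:y^2=x^3+(a^2+b^2)x^2+a^2b^2x$, the $x$-coordinate of $2*P$ is $x_2=\frac{(x_1^2-B)^2}{(2y_1)^2}$ with $B:=a^2b^2$. Substituting into $x_0=\frac{ab}{x_2}$ and replacing $(2y_1)^2$ by $4x_1^3+4(a^2+b^2)x_1^2+4a^2b^2x_1$ gives
$$
x_0\;=\;\frac{ab\,(2y_1)^2}{(x_1^2-B)^2}\;=\;\frac{4ab\,x_1(x_1+a^2)(x_1+b^2)}{(x_1^2-a^2b^2)^2}\;=\;\frac{4ab\,x_1(x_1+a^2)(x_1+b^2)}{(x_1-ab)^2(x_1+ab)^2}\,.
$$
The next step is the crucial one: I want to read off a numerator $p$ and denominator $q$ of $x_0$ for which $q$ is a square and $p$ is $ab$ times a square. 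With $a=m^2-n^2$, $b=2mn$ the factor $4ab=4(m^2-n^2)(2mn)=8mn(m^2-n^2)$; combining this with the factors $(x_1+a^2)$ and $(x_1+b^2)$ one expects, after absorbing the square denominator $(x_1^2-a^2b^2)^2$, a presentation $x_0=\frac{p}{q}$ with $q=\tq^{\,2}$, $p=ab\cdot\tp^{\,2}$, just as claimed in the statement. Concretely I would guess $q=(x_1^2-a^2b^2)^2=\tq^{\,2}$ with $\tq=x_1^2-a^2b^2$, and then $p=4ab\,x_1(x_1+a^2)(x_1+b^2)$, so that the assertion $p=ab\tp^{\,2}$ amounts to $4x_1(x_1+a^2)(x_1+b^2)$ being a perfect square — which it is, because it equals $(2y_1)^2$. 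This identifies $\tp=\frac{2y_1}{\sqrt{ab}}$ up to the rationality bookkeeping handled as in \LEM\;\ref{lem:main28}; so the ``$q$ is a square, $p$ is $ab$ times a square'' part is essentially forced by the duplication formula, with the parametrization $a=m^2-n^2$, $b=2mn$ only needed to make things integral.

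It then remains to verify that this $p,q$ satisfy the two conditions in~(\ref{eq:24}), namely $a(aq+bp)=\Box$ and $b(ap+bq)=\Box$. Following the pattern of \LEM\;\ref{lem:main28}, where $a^2q^2+b^2p^2$ and $a^2p^2+b^2q^2$ turned out to be $a^2(\cdots)^2$ and $b^2(\cdots)^2$ respectively, I expect here that $aq+bp$ equals $a$ times a perfect square and $ap+bq$ equals $b$ times a perfect square, so that $a(aq+bp)=a^2\cdot\Box=\Box$ and $b(ap+bq)=b^2\cdot\Box=\Box$. Plugging in $q=(x_1^2-a^2b^2)^2$, $p=4ab\,x_1(x_1+a^2)(x_1+b^2)$, and $a=m^2-n^2$, $b=2mn$, the quantities $aq+bp$ and $ap+bq$ become explicit polynomials in $x_1,m,n$; the claim is that each factors as (the appropriate one of $a$ or $b$) times the square of a polynomial. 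I would check this by the same mechanical expansion used in \LEM\;\ref{lem:main28} — most plausibly one finds $aq+bp=a\bigl(x_1^2+2b^2x_1+a^2b^2\bigr)^2$-type expressions, echoing the $a^2(a^4b^4+2b^4x_1+x_1^2)^2$ that appeared there, with the reduction in degree reflecting that we are now on $\Cab$ rather than $\Caabb$. Once the two square identities are confirmed, both halves of~(\ref{eq:24}) hold and the lemma follows.

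The main obstacle I anticipate is purely computational: guessing the correct square completions for $aq+bp$ and $ap+bq$. Unlike the quadratic case, where the symmetry $a\leftrightarrow b$ and the pure fourth powers made the factorizations transparent, here the asymmetry between the two conditions (one weighted by $a$, the other by $b$) means the two square-completed forms will not be related by a simple swap, and the polynomial $4ab\,x_1(x_1+a^2)(x_1+b^2)$ mixing $a,b$ linearly could obscure the pattern. I would handle this by first substituting the Pythagorean parametrization $a=m^2-n^2$, $b=2mn$ at the outset (so that $ab=2mn(m^2-n^2)$ and $a^2+b^2=(m^2+n^2)^2$), which tends to linearize the algebra, and then either factor by hand or verify a conjectured identity by comparing coefficients. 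Since the authors explicitly say ``the calculations are similar, we shall omit the details,'' I expect the square identities to come out cleanly once the right ansatz is in place, and no genuinely new idea beyond the \LEM\;\ref{lem:main28} template is required.
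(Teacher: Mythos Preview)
Your proposal is correct and mirrors the paper's proof: the same duplication formula (with $B=a^2b^2$\,---\,the paper's printed $B=a^4b^4$, $A=a^4+b^4$ are evident copy-paste slips from \LEM\;\ref{lem:main28}), the same choice $q=(x_1^2-a^2b^2)^2$ and $p=ab\cdot(2y_1)^2$, and the verification of~(\ref{eq:24}) via exactly the completions you guessed\,---\,writing $x_1=u/v$, the paper obtains $a(aq+bp)$ and $b(ap+bq)$ as squares of $a\bigl(u^2+2b^2uv+a^2b^2v^2\bigr)$ and $b\bigl(u^2+2a^2uv+a^2b^2v^2\bigr)$, which are your $(x_1^2+2b^2x_1+a^2b^2)$ and its $a\leftrightarrow b$ partner homogenised. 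So your worry about asymmetry is unfounded, and the Pythagorean parametrisation of $a,b$ is not actually needed for these polynomial identities.
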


\begin{proof}
By Silverman and Tate~\cite[p.27]{SilvermanTate}, 
$$x_2=\frac{(x_1^2-B)^2}{(2y_1)^{\mathstrut 2}}\qquad\text{for $B:=a^4b^4$,}$$
and therefore
$$x_0\;=\;\frac{ab}{x_2}\;=\;
\frac{ab\bigl(4x_1^3+4 Ax_1^2+4 B x_1\bigr)}{(x_1^2-B)^{\mathstrut 2}}\;=\;
\frac{p}{q}\qquad\text{for $A:=a^4+b^4$.}$$
So, $q=\Box$ and $p=ab\cdot\tp^2$ for some integer $\tp$.

Now, for $x_1=\frac uv$ and $x_0=\frac pq$ (with $a=m^2-n^2$ and $b=2mn$) we obtain
$$
a\cdot(aq+bp)\;=\frac{1}{v^{\mathstrut 4}}\Bigl(
a^2\cdot\bigl(a^2b^2v^2+u(u+2b^2v)\bigr)
\Bigr)^2\;=\;\Box
$$
and
$$
b\cdot(ap+bq)\;=\frac{1}{v^{\mathstrut 4}}\Bigl(
b^2\cdot\bigl(a^2b^2v^2+u(u+2a^2v)\bigr)
\Bigr)^2\;=\;\Box
$$
which completes the proof.
\end{proof}

The next result gives a relation between rational points on $\Cab$ with
quadratic $x$-coordinate and {\pps} $(k,l)$ such that $(a^2 k,b^2 l)$ is a {\pp}.

\begin{lem}\label{lem:final24} Every {\pp} $(k,l)$ such that $(a^2 k,b^2 l)$  is a {\pp}
corresponds to a rational point on $\Cab$ with quadratic $x$-coordinate,
and vice versa.
\end{lem}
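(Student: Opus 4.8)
The plan is to mirror the proof of Lemma~\ref{lem:final28}, systematically replacing the curve $\Caabb$ by $\Cab$ and the conditions~\eqref{eq:28} by~\eqref{eq:24}; the opening remark that the calculations are similar is exactly the license for this. For the forward direction I would begin with a rational point on $\Cab$ whose $x$-coordinate $x_2$ is a rational square and apply Lemma~\ref{lem:main24}, just as the proof of Lemma~\ref{lem:final28} applies Lemma~\ref{lem:main28}, to write $x_0=\frac{ab}{x_2}=\frac pq$ with $q=\tq^2$, $p=ab\,\tp^2$, where $p$ and $q$ satisfy~\eqref{eq:24}. Substituting these shapes of $p$ and $q$ into the two square conditions of~\eqref{eq:24} collapses them to a pair of simultaneous square conditions governed by the single rational parameter $t:=\tq/\tp$.

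The core of the argument is then to turn this pair of simultaneous square conditions into a single Pythagorean identity, exactly as the analogous step does in the proof of Lemma~\ref{lem:final28}. I would clear one condition by a rational parametrization of $t$ (introducing an auxiliary multiplier $\rho$, as there), substitute into the remaining condition, and clear denominators; writing the free parameter as $t=\frac rs$ with $r,s$ coprime and setting $k:=r^2-s^2$, $l:=2rs$ then produces a {\pp} $(k,l)$ for which $(a^2k,b^2l)$ is a {\pp}, which is what the statement asserts.

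For the converse I would reverse this chain. Starting from a {\pp} $(k,l)$ for which $(a^2k,b^2l)$ is a {\pp}, I may assume $k,l$ coprime and write $k=r^2-s^2$, $l=2rs$ with $r,s$ coprime; this recovers $t=\frac rs$, from which I reconstruct $\tp,\tq$ and hence $p,q$, producing a rational point on the transformed curve of Lemma~\ref{lem:trans24}. Transporting this point back through the transformation of Lemma~\ref{lem:trans24} then returns a rational point on $\Cab$ whose $x$-coordinate $x_2=\frac{ab}{x_0}$ is a square.

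I expect the main obstacle to lie in the converse, specifically in the reversibility of the parametrization: one must check that the factorization of the single square product into the two separate squares of~\eqref{eq:24} is genuinely attainable rather than merely sufficient, and that reconstructing $p,q$ from $t$ and re-homogenizing introduces no spurious factor that would destroy the squareness of $x_2$. Since these are precisely the verifications carried out for the companion curve $\Caabb$ in the proof of Lemma~\ref{lem:final28} and for $\Cab$ in Lemma~\ref{lem:main24}, I would lean on those computations and record only the modifications forced by carrying $a,b$ through the formulas in place of $a^2,b^2$.
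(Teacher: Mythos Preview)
Your plan is the paper's own argument: invoke Lemma~\ref{lem:main24} to get $x_0=\frac{ab}{x_2}=\frac{ab\,\tp^2}{\tq^2}$ with~\eqref{eq:24} holding, observe that these collapse to the pair $(\tq/\tp)^2+a^2=\Box$ and $(\tq/\tp)^2+b^2=\Box$, parametrize one condition Pythagorean-style with an auxiliary $\rho$, substitute into the other, and read off $k=r^2-s^2$, $l=2rs$; the converse reverses this chain, exactly as in the paper. Two small caveats. First, the lemma's statement carries a slip copied from Lemma~\ref{lem:final28}: the computation on $\Cab$ actually yields $(ak)^2+(bl)^2=\Box$, not $(a^2k)^2+(b^2l)^2=\Box$, and indeed the paper's own proof concludes ``which shows that $(a,b)$ is a {\dppp}'', so do not be surprised when your calculation produces $(ak,bl)$. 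Second, you have overloaded the letter $t$, using it once for $\tq/\tp$ and again for the free parametrization variable; the paper keeps these separate by writing the first quantity as $g/f$ and reserving $t$ for the second, and you will want to do the same when you write it out.
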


\begin{proof}
Let $x_2=\Box$ be the $x$-coordinate of a rational point on
$\Cab$. Then, by {\LEM}\;\ref{lem:main24}, 
$\frac{ab}{x_2}=\frac{ab\cdot f^2}{g^{\mathstrut 2}}$, 
where $p=ab\cdot f^2$ and $q=g^2$ satisfy~(\ref{eq:24}), {\ie},
$a^2g^2+a^2b^2f^2=\Box$. So, $\bigl(\frac{g}{f}\bigr)^2+
b^2=\rho^2$ for some $\rho\in\Q$ and $\bigl(\frac{g}{f}\bigr)^2+
a^2=\Box$. Let $\frac gf=\frac{2\rho t}{t^2+1}$ and 
$b=\frac{\rho (t^2-1)}{t^2+1}$. Then $\rho=\frac{b(t^2+1)}{t^2-1}$ and
$\frac gf=\frac{2 b t}{t^{\mathstrut 2}-1}$, which gives us
$$t=\frac{bf\pm\sqrt{g^2+b^2f^2}}{g}\,.$$
Since $$g^2+b^2f^2\;=\;q+\tfrac{b^2p}{ab}\;=\;q+\tfrac{bp}{a}\,,$$ 
by multiplying with $a^2$ we get $$a^2\cdot(g^2+b^2f^2)\;=\;
a^2\cdot q+ab\cdot p\;=\;a(aq+bp)\,.$$ Hence, by {\LEM}\;\ref{lem:main24},
$g^2+b^2f^2=\Box$ and therefore $t$ is rational, say $t=\frac rs$.
Finally, since $\bigl(\frac{g}{f}\bigr)^2+a^2=\Box$, we obtain
$$a^2\cdot(r^2-s^2)^2+b^2\cdot(2rs)^2\;=\;\Box\,,$$ and for $k:=r^2-s^2$, $l:=2rs$, we finally
get $$(ak)^2+(bl)^2\;=\;\Box\qquad\text{where $k^2+l^2=\Box$}\,,$$
which shows that $(a,b)$ is a {\dppp}.

Assume now that we find a {\pp} $(k,l)$ such that $(ak,bl)$ is a {\pp}. 
Without loss of generality we may assume that $k$ and $l$ are relatively prime.
Thus, we find relatively prime positive integers $r$ and $s$ such that 
$k=r^2-s^2$ and $l=2rs$. With $t:=\frac rs$, $a$, and $b$, we can compute $p$ and
$q$, and finally obtain a rational point on $\Cab$ whose $x$-coordinate is a square.
\end{proof}

We are now ready for the

\begin{proof}[Proof of Theorem\;\ref{thm:main24}]
For every rational point $P$ on $\Cab$ with quadratic $x$-coordinate
let $(k_P,l_P)$ be the corresponding {\pp}. 
By {\LEM}\;\ref{lem:final24} it is enough to show that no rational point
with quadratic $x$-coordinate has finite order.

Let us consider the $x$-coordinates of the torsion points on 
the curve $\Cab$. For simplicity, we consider the $8$
torsion points on the equivalent curve 
$$y^2=\frac{ab}{x}+(a^2+b^2)+ab x\,.$$
The two torsion points at infinity are $(0,1,0)$ (which is the neutral
element of the group) and $(1,0,0)$ (which is a point of order~$2$). 
The other two points of order~$2$ are $(-\frac{a}{b},0)$ and
$(-\frac{b}{a},0)$, and the four points of order~$4$ are 
$\bigl(1,\pm(a+b)\bigr)$ and $\bigl(-1,\pm(a-b)\bigr)$. 
Now, we have that none of the values 
$$\frac{1}{ab}\,,\qquad \frac{-1}{ab}\,,\qquad
\frac{-\frac ab}{ab}=-\frac 1{b^{\mathstrut 2}}\,,\qquad 
\frac{-\frac ba}{ab}=-\frac 1{a^{\mathstrut 2}}\,,$$ is a rational square.
For example, if $\frac{1}{ab}=\Box$, then $ab=\Box$, and since $b=2mn$, 
this implies that $ab=4\cdot\Box$. So, we have $\frac{ab}{2}=2\cdot\Box$, 
which is impossible (see~\cite[p.\;175]{Frenicle29}).
Thus, there is no {\pp} $(k,l)$ such that $(ak,bl)$ is a {\pp}.
\end{proof}

Similar as above, we get the following

\begin{cor}
If $(a,b)$ is a {\dppp}, then there are 
infinitely many {\pps} $(k,l)$, not multiples of each other, such that
$(ak,bl)$ is a {\pp}.
\end{cor}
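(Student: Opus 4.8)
The plan is to transcribe, essentially verbatim, the proof of the corresponding corollary for {\qppps}. By {\THM}\;\ref{thm:main24}, since $(a,b)$ is a {\dppp}, the curve $\Cab$ has positive rank, so we may fix a rational point $P$ on $\Cab$ of infinite order. The first step is to observe that for every integer $i$ the point $2i*P$ has a square as its $x$-coordinate: applying {\LEM}\;\ref{lem:main24} with $i*P$ in place of $P$, the $x$-coordinate $x_2$ of $2i*P=2*(i*P)$ satisfies $\frac{ab}{x_2}=\frac{ab\,\tp^{2}}{\tq^{2}}$, hence $x_2=\bigl(\tfrac{\tq}{\tp}\bigr)^{2}$. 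By {\LEM}\;\ref{lem:final24}, each of the points $2i*P$ ($i\in\Z$) therefore produces a {\pp} $\bigl(k_{2i*P},l_{2i*P}\bigr)$ with $\bigl(a\,k_{2i*P},b\,l_{2i*P}\bigr)$ a {\pp}; and since $P$ has infinite order, the points $2i*P$ are pairwise distinct.

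The second, and only mildly delicate, step is to see that these pairs represent infinitely many distinct {\pps}, no two of which are multiples of one another. Here I would normalise each $\bigl(k_{2i*P},l_{2i*P}\bigr)$ to be primitive and positive; then ``being multiples of each other'' simply means ``being equal'', so it suffices to bound how many of the points $2i*P$ can give rise to the same primitive pair. Running the reverse construction in the proof of {\LEM}\;\ref{lem:final24} shows that a primitive positive {\pp} $(k,l)=(r^{2}-s^{2},2rs)$ fixes $t:=\tfrac rs\in(1,\infty)$, and hence, through the formulas for $p$ and $q$, a single value $x_2=\bigl(\tfrac{2bt}{t^{2}-1}\bigr)^{2}$; as such an $x_2$ is the $x$-coordinate of at most two points of $\Cab$, the assignment $2i*P\mapsto\bigl(k_{2i*P},l_{2i*P}\bigr)$ is at most two-to-one. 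Consequently its image is infinite, which is exactly what we need.

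Combining the two steps yields infinitely many {\pps} $(k,l)$, pairwise not multiples of each other, with $(ak,bl)$ a {\pp}, proving the corollary. The only point requiring genuine (though short) work is the finite-to-one claim in the second step; everything else is a line-by-line copy of the argument for the {\qppps} corollary, and since the text already notes that ``the calculations are similar'', I would present the proof tersely, pointing back to that corollary for the routine bookkeeping rather than repeating it.
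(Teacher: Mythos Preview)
Your proposal is correct and follows the same approach as the paper, which in fact gives no separate proof at all here but simply writes ``Similar as above'' and defers to the corollary in the {\qppp} case. Your version is actually more careful than the paper's original argument: where the paper only asserts that ``each of the corresponding {\pps} $(k_{2i*P},l_{2i*P})$ can be a multiple of just finitely many other such {\pp}'', you make the finite-to-one claim precise by tracing the reverse construction of {\LEM}\;\ref{lem:final24} to show that a primitive pair determines $x_2$ and hence at most two points.
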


\begin{rmk}{\rmksty
Let $(a,b)$ be a {\dppp} and let $(k_1,l_1)$ be a {\pp} such that 
$(ak_1,bl_1)$ is a {\pp}. Then $(k_1,l_1)$ is a {\dppp} and we find a
{\pp} $(k_2,l_2)$, which is not a multiple of $(a,b)$ such that 
$(k_1k_2,l_1l_2)$ is a {\pp}, which implies that $(k_2,l_2)$ is a {\dppp}.
Proceeding this way, we can construct an infinite family of {\dppps} which
are not multiples of each other.}
\end{rmk}

\begin{alg}{\algsty
The following algorithm decribes how to construct {\pps} $(k,l)$ from rational
points on $\Cab$ of infinite order.
\begin{itemize}
\item Let $P$ be a rational point on $\Cab$ of infinite order and
let $x_2$ be the $x$-coordinate of $2*P$.
\item Let $f$ and $g$ be relatively prime positive integers such
that $$\frac{g}{f}\;=\;\sqrt{x_2}\,.$$
\item Let $r$ and $s$ be relatively prime positive integers such
that $$\frac rs\;=\;\frac{bf+\sqrt{g^2+b^2f^2}}{g}\,.$$
\item Let $k:=r^2-s^2$ and let $l:=2rs$.
\end{itemize}
Then $(ak,bl)$ is a {\pp}. 
}
\end{alg}


\bibliographystyle{plain}

\end{document}